\newtheorem{theorem}{Theorem}[section]
\newtheorem{lemma}[theorem]{Lemma}
\newtheorem{proposition}[theorem]{Proposition}
\newtheorem{corollary}[theorem]{Corollary}
\theoremstyle{definition}
\theoremstyle{remark}
\newtheorem{remark}[theorem]{Remark}
\def\Z{\mathbb{Z}}
\def\F{\mathbb{F}}
\def\deg{\textup{deg}}
\def\vol{\textup{vol}}
\title{Heegaard Floer homology and splicing homology spheres}
\author{\c{C}a\u{g}ri Karakurt}
 \address{Department of Mathematics, Bo\u{g}azi\c{c}i University, Bebek 34342}
   \thanks{\c{C}K was supported by BAGEP award of the Science Academy and  Bo\u{g}azi\c{c}i University Research Fund Grant Number 12482}
\email{{cagri.karakurt@boun.edu.tr}}
\author{Tye Lidman}
 \address{Department of Mathematics, North Carolina State University\\Raleigh, NC 27695}
   \thanks{{TL was supported by NSF grant DMS-1709702 and a Sloan Fellowship}}
\email{{tlid@math.ncsu.edu}}
\author{Eamonn Tweedy}
 \address{Department of Mathematics, Widener University, Chester, PA 19013}
\email{{etweedy@widener.edu}}
\begin{document}
\maketitle

\begin{abstract}
We prove a basic inequality for the $d$-invariants of a splice of knots in homology spheres.  As a result, we are able to prove a new relation on the rank of reduced Floer homology under maps between Seifert fibered homology spheres, improving results of the first and second authors.  As a corollary, a degree one map between two aspherical Seifert homology spheres is homotopic to a homeomorphism if and only if the Heegaard Floer homologies are isomorphic.  
\end{abstract}

\section{Introduction}
The Heegaard Floer homology package, due to Ozsv\'ath and Szab\'o \cite{os:disk},  \cite{OzSz1}, has become an extraordinarily valuable set of tools in the study of $3$-manifolds, $4$-manifolds, knots, and links.   Although the Heegaard Floer $3$-manifold invariants behave in a straightforward way with respect to the operation of connected sum, it is much more difficult to determine how they behave under the more general operation of splicing two $3$-manifolds along knots.  Recall that for two knots $K_0, K_1$ in homology spheres $Y_0, Y_1$, the \emph{splice} is the manifold obtained by gluing the exteriors of the $K_i$ by identifying the meridian of $K_0$ with the longitude of $K_1$ and vice versa.  See \cite{Sav} for a further exposition on the splicing construction.  In the current article, we study some properties of the Heegaard Floer homology of a splice and apply this to the Heegaard Floer homology of Seifert homology spheres.

We shall focus on two particular components of the Heegaard Floer homology package, the {\em reduced Heegaard Floer homology} $HF_{red}$ and the {\em correction term} $d$.  For the weaker {\em hat Heegaard Floer homology} $\widehat{HF}$, the splice can be studied using bordered Floer homology \cite{LOT} - see, for instance, the work of Hedden-Levine \cite{HeddenLevine}, Hanselman \cite{Hanselman} and Hanselman-Rasmussen-Watson \cite[Figure 3]{HRW} - but does not have the full power of the Heegaard Floer theory.   Throughout the present article we assume that $Y$ is an integral homology sphere, but the following Heegaard Floer constructions also work for rational homology spheres.  The \emph{reduced Heegaard Floer homology} $HF_{red}(Y)$ is a finitely generated module over $\F[U]$, where $\F = \Z/2\Z$,  which is a quotient of the \emph{plus Heegaard Floer homology} module $HF^+(Y)$; the dimension of $HF_{red}$ over $\F$ can be viewed as some measure of the complexity of $Y$ \cite{os:disk}.  In particular, $HF_{red}(Y) = 0$ if and only if $Y$ is an \emph{$L$-space}, i.e. $Y$ has the same Heegaard Floer homology as a lens space.  In this case, $Y$ is obstructed from admitting a co-orientable taut foliation or admitting a symplectic filling with $b^+ > 0$ \cite{OS04}.  It should be noted that the only Seifert fibered homology spheres which are $L$-spaces are $S^3$ and the Poincar\'e homology sphere with either of its orientations, and it is not known whether there are other prime $L$-space homology spheres aside from these.

The \emph{correction term} or \emph{$d$-invariant} $d(Y)$ is an even integer which can be used to obstruct $Y$ from bounding certain types of $4$-manifolds \cite{OzSz1}.  In particular, $d$ is a surjective homomorphism from the integer homology cobordism group to $2\mathbb{Z}$ and so if $d(Y) \neq 0$ one can conclude that $Y$ is of infinite order in homology cobordism.  The $d$-invariants have been used in many applications, including Dehn surgery problems, knot concordance, and computing the unknotting numbers of knots.  It is shown in \cite{OzSz1}  that the following relationship holds among $HF_{red}$, $d$, and Casson's invariant $\lambda$: 
\begin{equation}\label{eq:casson}
\frac{d(Y)}{2} + \lambda(Y) = \chi(HF_{red}(Y)).
\end{equation}
If $Y$ is merely a rational homology sphere, the $d$-invariant associates to each $Spin^c$ structure on $Y$ a rational number and these numbers are $Spin^{c}$-rational homology cobordism invariants.

The $d$-invariant is known to be additive under connected sum \cite{OzSz1} in general.  However,  it is not known how the $d$-invariant behaves with respect to the operation of splicing homology spheres along knots, let alone gluings of knot exteriors in general.   Our first result is an estimate for the $d$-invariants of certain gluings of exteriors of knots in homology spheres in terms of the $d$-invariants of surgeries on those knots.  To state the result, we fix our terminology.   If $K$ is a knot in an integral  homology sphere $Y$, let us denote the result of $\nicefrac{1}{n}$-framed surgery on $Y$ along $K$ by $Y(K;\nicefrac{1}{n})$.  Suppose $K_1$ and $K_2$ are knots in homology spheres $Y_1$ and $Y_2$, respectively. For  integers $n_1$ and $n_2$, let $A$ be the matrix 
\begin{align}\label{eqn:matrixA}
	A = \begin{pmatrix} -n_1 & 1 \\ 1 - n_1 n_2 & n_2 \end{pmatrix}.
\end{align}
  Let $Y(K_1,K_2, \pm A)$,  denote the manifolds resulting from gluing the exterior of $K_1$ to the exterior of $K_2$ by a map which identifies the first homology of the boundary tori by the matrix $\pm A$,  where the bases are given by the usual meridian-longitude coordinates. Although $Y_1$ and $Y_2$ are suppressed in the notation $Y(K_1,K_2,A)$, they should be inferred from where $K_1$ and $K_2$ are assumed to live in each instance. While it may seem like the  gluing matrices $\pm A$  are quite specific, they constitute precisely the set of  gluings of knot exteriors which result in homology spheres - this follows from  \cite[Lemma 1]{Gordon}. When $n_1=n_2=0$, the homology sphere $Y(K_1,K_2,A)$ is the splice of $Y_1$ and $Y_2$ along $K_1$ and  $K_2$.

\begin{theorem}\label{thm:knot-exteriors} We have
$$ \sum_{i=1}^2 d(Y_i(K_i,\nicefrac{1}{(n_i+1)})) \leq d(Y(K_1,K_2, \pm A)) \leq \sum_{i=1}^2 d(Y_i(K_i,\nicefrac{1}{(n_i-1)})).$$
\end{theorem}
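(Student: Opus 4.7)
The plan is to derive both inequalities from the Ozsv\'ath--Szab\'o correction-term monotonicity for negative-definite cobordisms,
\[
d(Y_+,\mathfrak{s}|_{Y_+}) \;\geq\; d(Y_-,\mathfrak{s}|_{Y_-}) + \frac{c_1(\mathfrak{s})^2 + b_2(W)}{4},
\]
applied to suitable cobordisms between $Y(K_1,K_2,\pm A)$ and the relevant connect sum, together with the additivity $d(M\# N)=d(M)+d(N)$.  For each sign I will construct a cobordism $W_\pm$ and choose an extremal Spin$^c$ structure so that the Chern-class correction vanishes, reducing the inequality to the clean bound $d(Y_+)\ge d(Y_-)$ on the unique Spin$^c$ structures of the homology spheres involved.

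To build the cobordisms I will work on the JSJ torus $T=\partial B_1=\partial B_2\subset Y(K_1,K_2,\pm A)$ (where $B_i=Y_i\setminus\nu K_i$) and exploit the gluing data encoded in $\pm A$.  A direct computation in the $(\mu_i,\lambda_i)$ bases shows that a Rolfsen twist along the longitude $\lambda_i\subset T$, realized as $\mp 1$-framed Dehn surgery on a null-homologous curve in $Y(K_1,K_2,\pm A)$, shifts the integer $n_i$ by $\pm 1$ while leaving $n_{3-i}$ fixed.  Similarly, a twist along a combination such as $\lambda_1\pm\lambda_2$ shifts both $n_i$ simultaneously and further compresses the JSJ torus along a capping disk, so that the incompressible torus becomes a reducing sphere and the splice becomes a connect sum.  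Assembling an appropriate sequence of these twists produces a $2$-handle cobordism $W_\pm$ from $Y(K_1,K_2,\pm A)$ to $Y_1(K_1,1/(n_1\pm 1))\# Y_2(K_2,1/(n_2\pm 1))$, with intersection form computable from $A$ and the framings of the surgery curves on $T$.

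The main technical obstacle is verifying that $W_\pm$ is negative definite with $b_1(W_\pm)=0$ and that an extremal characteristic Spin$^c$ structure $\mathfrak{s}$ satisfies $c_1(\mathfrak{s})^2+b_2(W_\pm)=0$.  This reduces to a linking-number calculation for curves on $T$ inside $Y(K_1,K_2,\pm A)$; the relevant data is the $T$-framing of each surgery curve relative to the Seifert framing in the ambient homology sphere, which can be extracted from the gluing matrix $A$ and the longitudes $\lambda_i$ alone.  Once the cobordisms are shown to have the required definiteness and Chern-class data, the correction-term inequality simplifies in each direction, and combining with the additivity of $d$ under connected sum yields the two bounds claimed in the theorem.
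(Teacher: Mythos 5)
Your overall engine---the Ozsv\'ath--Szab\'o correction-term inequality for negative-definite two-handle cobordisms together with additivity of $d$ under connected sum---is the same one the paper uses (the paper simply quotes the resulting surgery monotonicity, \cite[Corollary 9.14]{OzSz1}: if $Z_2 = Z_1(L;\epsilon)$ then $\epsilon d(Z_2) \le \epsilon d(Z_1)$). The first step of your setup is also the paper's: observing that $\mu_i + n_i\lambda_i$ and $\lambda_i$ are the meridian and longitude of the core $\widetilde{K}_i \subset Y_i(K_i,\nicefrac{1}{n_i})$, so that $Y(K_1,K_2,A)$ is the splice of the two surgered manifolds along the cores. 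The problem is that the entire content of the theorem then lives in the step you defer as ``the main technical obstacle'': exhibiting a curve and framing for which a single $\mp 1$-framed (Seifert-framed) surgery turns the splice into the connected sum $Y_1(K_1,\nicefrac{1}{(n_1\mp1)}) \# Y_2(K_2,\nicefrac{1}{(n_2\mp1)})$. This is exactly the paper's Lemma~\ref{lem:desc} (Gordon's observation), which is proved by an explicit Kirby calculus computation; without it your cobordisms $W_\pm$ do not exist yet, so the proof is not complete.

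Moreover, the mechanism you sketch is suspect as stated. Surgery on a curve $c \subset T$ with coefficient $\mp 1$ \emph{relative to the framing induced by $T$} only cuts along $T$ and reglues by a Dehn twist $\tau_c^{\pm 1}$; the result is again a torus gluing of the two knot exteriors, and when the $K_i$ are genuinely knotted $T$ remains incompressible, so no such twist can ever produce a connected sum of Dehn fillings. (Your $\lambda_i$-twists are of this type --- the $T$-framing of $\lambda_i$ agrees with its Seifert framing --- which is why they only shift $n_i$.) To reach the connected sum you must surger a curve such as $\lambda_1 \pm \lambda_2$ whose $T$-framing \emph{differs} from its Seifert framing, and you need two things simultaneously: (i) the surgery coefficient relative to the $T$-framing must be exactly what is needed to compress $T$ and split off the two fillings with the correct slopes $\nicefrac{1}{(n_i\mp1)}$, and (ii) the same coefficient must equal $\mp 1$ relative to the Seifert framing so that the cobordism is definite and the characteristic-vector term $c_1(\mathfrak{s})^2 + b_2$ vanishes for an extremal $\mathfrak{s}$. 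Verifying that these two framing conditions are compatible is precisely the linking-number computation you postpone, and it is the heart of the matter rather than a routine check. You also do not address the $-A$ case, which in the paper is handled by noting that $-A$ splices $(Y_1(r(K_1),\nicefrac{1}{n_1}), r(\widetilde{K}_1))$ with $(Y_2(K_2,\nicefrac{1}{n_2}),\widetilde{K}_2)$ and that $d$ of a surgery is insensitive to string orientation.
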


Recall that Rasmussen \cite{RasmussenThesis} and Ni-Wu \cite{niwu} defined a family of invariants $V_k$ of a knot $K \subset S^3$ which are in fact invariants of the doubly filtered chain homotopy type of the knot Floer complex of $K$.  In the case of gluing the exteriors of two knots in $S^3$, we can estimate and often directly calculate the resulting $d$-invariants using the invariant $V_0$ of the knots and their mirrors.

\begin{proposition}\label{prop:knot-exteriors2}
Let $K_1$ and $K_2$ be knots in $S^3$, let $n_1,n_2$ be integers, and let $A$ be the matrix in \eqref{eqn:matrixA}. Then the following hold:
\begin{enumerate}[(i)]
\item\label{it:item1} If $\epsilon_i^{\pm}$ denotes the sign of $n_i \pm 1$, then
\begin{equation}\label{eq:item1}
 -2 \sum_{i=1}^2 \epsilon^{+}_{i} V_0( \epsilon^{+}_{i} K_i) \leq d(Y(K_1,K_2, \pm A)) \leq  -2 \sum_{i=1}^2 \epsilon^{-}_{i} V_0( \epsilon^{-}_{i} K_i).
\end{equation}
\item\label{it:item2} In particular, if $|n_1|, |n_2| \geq 2$, then 
\begin{equation}\label{eq:item2}
d(Y(K_1,K_2,\pm A)) = -\epsilon_1 2V_0(\epsilon_1 K_1) - \epsilon_2 2 V_0(\epsilon_2 K_2),
\end{equation}
where $\epsilon_i$ denotes the sign of $n_i$.
\item\label{it:item3} If $K_1$ and $K_2$ have the property that $V_0(K_i) = V_0(-K_i) = 0$, then $d(Y(K_1,K_2, \pm A)) = 0$ for any values of $n_1$,$n_2$.
\end{enumerate}
\end{proposition}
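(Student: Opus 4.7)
The plan is to specialize Theorem \ref{thm:knot-exteriors} to $Y_1 = Y_2 = S^3$ and then rewrite each surgery $d$-invariant appearing in the bounds using the Ni-Wu formula \cite{niwu}. Concretely, for a knot $K \subset S^3$ and a positive integer $n$ one has $d(S^3_{1/n}(K)) = -2V_0(K)$. Combining this with the orientation-reversing identity $-S^3_{1/n}(K) = S^3_{1/(-n)}(-K)$ together with $d(-Y) = -d(Y)$ gives $d(S^3_{1/n}(K)) = 2V_0(-K)$ for $n < 0$. Both cases can be written uniformly as
\[ d(S^3_{1/n}(K)) \;=\; -2\epsilon \, V_0(\epsilon K), \qquad \epsilon = \mathrm{sign}(n) \in \{\pm 1\}. \]

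For part \ref{it:item1}, I substitute $n = n_i \pm 1$ into this uniform formula. When $n_i \pm 1 \neq 0$, the corresponding summand in the bound of Theorem \ref{thm:knot-exteriors} contributes exactly $-2\epsilon_i^{\pm} V_0(\epsilon_i^{\pm} K_i)$. In the degenerate case $n_i = \mp 1$, we have $n_i \pm 1 = 0$, so $\nicefrac{1}{0}$-surgery is trivial and returns $S^3$ with $d = 0$; this matches the stated bound under the convention that $\epsilon_i^{\pm} = 0$ annihilates the $V_0$ term. Summing over $i$ then yields both inequalities of \eqref{eq:item1}.

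Part \ref{it:item2} is immediate: when $|n_i| \geq 2$ the three integers $n_i - 1$, $n_i$, and $n_i + 1$ share the common sign $\epsilon_i$, so the lower and upper bounds of \eqref{eq:item1} collapse to the same expression and pin down $d(Y(K_1, K_2, \pm A))$. Part \ref{it:item3} is equally direct: under the hypothesis $V_0(K_i) = V_0(-K_i) = 0$, every term on both sides of \eqref{eq:item1} vanishes regardless of the signs $\epsilon_i^{\pm}$, and $d(Y(K_1, K_2, \pm A))$ is squeezed to zero.

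There is no serious obstacle: modulo Theorem \ref{thm:knot-exteriors}, the proposition amounts to a dictionary translation of its surgery $d$-invariant bounds into the language of $V_0$. The only point requiring any care is the degenerate boundary case $n_i = \pm 1$ described above, where the surgery coefficient $\nicefrac{1}{(n_i \pm 1)}$ formally becomes $\nicefrac{1}{0}$ and must be interpreted as a trivial filling.
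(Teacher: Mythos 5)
Your proposal is correct and follows essentially the same route as the paper: specialize Theorem~\ref{thm:knot-exteriors} to $Y_1=Y_2=S^3$ and translate the surgery $d$-invariants via the Ni--Wu formula $d(S^3(K,\nicefrac{1}{n})) = -2\epsilon V_0(\epsilon K)$, with parts \eqref{it:item2} and \eqref{it:item3} following by the same sign and squeeze observations. The only cosmetic differences are that you rederive the negative-$n$ case by mirroring rather than quoting the uniform statement of \cite[Proposition 1.6]{niwu}, and you spell out the degenerate $n_i\pm1=0$ case, which the paper leaves implicit.
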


The condition involving $V_0$ in part \eqref{it:item3}  of Proposition~\ref{prop:knot-exteriors2} holds in particular when $K_1$ and $K_2$ are slice knots. It is interesting to compare this with Gordon's result \cite[Corollary 3.1]{Gordon} which says that the gluing $Y(K_1,K_2,\pm A)$ in fact bounds a contractible $4$-manifold if either both  $K_1$ and $K_2$ are  slice knots or for one of $i = 1, 2$, both $K_i$ is slice and $n_i=0$. From this it immediately follows that $d(Y(K_1,K_2,\pm A)) = 0$.    Even though our proof using Lemma~\ref{lem:destimate} formally does not rely on Gordon's result, it uses his observation, namely  Lemma~\ref{lem:desc} below, which first appeared in  \cite{Gordon}. 

While the above results follow from a direct implementation of standard properties of $d$-invariants, we will also provide some novel applications to Seifert fibered homology spheres.  Recall that a collection $a_1, \ldots, a_n$ of $n \geq 3$ pairwise coprime positive integers determines the Seifert fibered integer homology sphere $Y=\Sigma(a_1, \ldots, a_n)$ as follows: There exist unique integers $b_1, \ldots, b_n$ and $e$ satisfying $ a_1 \ldots a_n \left( \sum_{j = 1}^n \frac{b_j}{a_j} - e \right) = - 1$ and $a_j>b_j\geq 0$ for all $j=1,\dots,n$.  Note that if $a_j = 1$, then $b_j = 0$.  We start with an $S^1$-bundle over $S^2$ with Euler number $-e$. Take $n$ distinct  fibers and perform $-\nicefrac{a_j}{b_j}$-framed Dehn surgery for  $j=1,\dots n$ to obtain $Y$.  This manifold also admits a natural $S^1$-action the orbits of which are still 
called fibers. If $a_j > 1$, then the fiber through the longitude of $-\nicefrac{a_j}{b_j}$-framed surgery has nontrivial isotropy of order $a_j$, and this fiber is called the singular Seifert fiber of order $a_j$.   All other fibers are called regular fibers.  If $Y$ has fewer than three singular fibers, then it is diffeomorphic to $S^3$.  Up to possible reversing orientation, any Seifert fibered space which is a homology sphere can be obtained in the above way.  Note that our conventions are chosen so that  $Y$ bounds a negative  definite plumbed $4$-manifold $W$ and we call this the positive Seifert orientation.  See \cite{LidTwe} for more details on this convention.

It was shown in \cite{os:plumb} how $d(Y)$ can be obtained from an elementary calculation involving the intersection form on $W$.  In addition to the applications we now describe, in Section~\ref{s:exam}, this technique is used in conjunction with the inequalities in Theorem~\ref{thm:knot-exteriors} above to calculate the precise value of the $d$-invariant of some graph manifold homology spheres.

Suppose now that $Y$ is a splice of any two positively oriented Seifert fibered homology spheres along knots which are Seifert fibers.  In this case, combining the right hand inequality of Theorem~\ref{thm:knot-exteriors} with Proposition~\ref{pro:LT} below immediately results in the following: 
\begin{corollary}\label{thm:splice}
Let $Y_1$ and $Y_2$ be two positively oriented Seifert fibered homology spheres and let $Y$ denote the splice of $Y_1$ and $Y_2$ along Seifert fibers of $Y_1$ and $Y_2$.  Then, we have 
\begin{equation}\label{eq:splice-d}
d(Y_1) + d(Y_2) \geq d(Y).
\end{equation}
\end{corollary}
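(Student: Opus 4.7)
The proof is essentially a two-step chase. First, I would specialize Theorem~\ref{thm:knot-exteriors} to the splice case by taking $n_1 = n_2 = 0$ in \eqref{eqn:matrixA}, so that $A$ becomes the standard meridian-longitude swap and $Y(K_1,K_2,A)$ is precisely the splice $Y$. Since $1/(n_i - 1) = -1$ in this setting, the right-hand inequality of Theorem~\ref{thm:knot-exteriors} specializes to
\begin{equation*}
d(Y) \;\leq\; d(Y_1(K_1, -1)) + d(Y_2(K_2, -1)).
\end{equation*}

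The second step is to pass from each $d(Y_i(K_i, -1))$ back to $d(Y_i)$. This is where the Seifert hypotheses enter: I would invoke the forthcoming Proposition~\ref{pro:LT}, which (for $K_i$ a Seifert fiber of the positively oriented Seifert fibered homology sphere $Y_i$) should supply the bound $d(Y_i(K_i, -1)) \leq d(Y_i)$. Summing this inequality over $i = 1,2$ and chaining with the previous display immediately yields $d(Y) \leq d(Y_1) + d(Y_2)$, which is the claim.

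The only content of the argument lies in the two inputs; once Theorem~\ref{thm:knot-exteriors} and Proposition~\ref{pro:LT} are in hand, the corollary follows by a single substitution and a single comparison. If there is any subtle point to check, it is only to verify that the $-1$-surgery coefficient produced by setting $n_i = 0$ in the upper bound of Theorem~\ref{thm:knot-exteriors} is exactly the surgery coefficient to which Proposition~\ref{pro:LT} applies, but the two inputs are calibrated so that this matches on the nose, so there is no real obstacle.
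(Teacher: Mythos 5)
Your proposal is correct and follows exactly the paper's argument: specialize the right-hand inequality of Theorem~\ref{thm:knot-exteriors} to $n_1=n_2=0$ and then apply Proposition~\ref{pro:LT} to the resulting $-1$-surgeries on Seifert fibers. The only cosmetic difference is that Proposition~\ref{pro:LT} actually gives the equality $d(Y_i(K_i,-1)) = d(Y_i)$ rather than just the inequality you cite, which of course suffices.
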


This has an immediate corollary using \eqref{eq:casson}.  Since $\chi(HF_{red}(Y_i)) = - \dim HF_{red}(Y_i)$ by our orientation conventions, and the Casson invariant is splice-additive \cite{FM}, we deduce 
\[
\chi(HF_{red}(Y)) \leq - \dim HF_{red}(Y_1)  - \dim HF_{red}(Y_2).
\]
Consequently, we have:
\begin{corollary}\label{cor:splice-red}
Let $Y_1$ and $Y_2$ be two positively oriented Seifert fibered homology spheres and let $Y$ denote the splice of $Y_1$ and $Y_2$ along Seifert fibers of $Y_1$ and $Y_2$.  Then, 
\begin{equation}\label{eq:splice-red}
\dim HF_{red}(Y) \geq \dim HF_{red}(Y_1) + \dim HF_{red}(Y_2).  
\end{equation}
\end{corollary}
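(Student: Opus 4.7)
The plan is to build directly on the inequality
\[
\chi(HF_{red}(Y)) \leq -\dim HF_{red}(Y_1) - \dim HF_{red}(Y_2)
\]
derived in the paragraph immediately preceding the corollary, which itself is obtained by feeding the $d$-invariant bound of Corollary~\ref{thm:splice}, the splice-additivity of the Casson invariant \cite{FM}, and the identity \eqref{eq:casson} into the parity concentration of $HF_{red}$ for positively oriented Seifert fibered homology spheres (which under the paper's convention yields $\chi(HF_{red}(Y_i)) = -\dim HF_{red}(Y_i)$). Given that inequality, only one further observation is needed to pass from an Euler characteristic statement to a dimension statement.

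The observation is the elementary parity bound: for any finitely generated $\Z$-graded $\F$-module $M$, writing $m_e$ and $m_o$ for the total dimensions in even and odd gradings, we have $\chi(M) = m_e - m_o$ and $\dim M = m_e + m_o$, hence $\dim M \geq -\chi(M)$. Applying this with $M = HF_{red}(Y)$ and chaining with the displayed inequality yields
\[
\dim HF_{red}(Y) \geq -\chi(HF_{red}(Y)) \geq \dim HF_{red}(Y_1) + \dim HF_{red}(Y_2),
\]
which is exactly \eqref{eq:splice-red}.

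The main conceptual work is already packaged into Corollary~\ref{thm:splice}; this last deduction is purely bookkeeping. The hypothesis that the two pieces are positively oriented Seifert fibered homology spheres enters only through the single-parity concentration of $HF_{red}(Y_i)$, which converts the Euler characteristic bound into a dimension bound on the two factors. By contrast, $HF_{red}(Y)$ for the splice need not be concentrated in a single parity, which explains why one can only expect an inequality rather than an equality in \eqref{eq:splice-red}: there is slack precisely equal to twice the even-graded part of $HF_{red}(Y)$.
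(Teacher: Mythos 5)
Your argument is correct and is essentially the paper's own: the authors derive the same Euler characteristic inequality $\chi(HF_{red}(Y)) \leq -\dim HF_{red}(Y_1) - \dim HF_{red}(Y_2)$ from Corollary~\ref{thm:splice}, splice-additivity of the Casson invariant, and \eqref{eq:casson}, and then pass to \eqref{eq:splice-red} via the same elementary bound $\dim M \geq -\chi(M)$, which they leave implicit in the word ``Consequently.'' Your write-up just makes that last bookkeeping step (and the role of the parity concentration of $HF_{red}(Y_i)$) explicit.
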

This should be compared to the work of Hanselman-Rasmussen-Watson \cite{HRW}, who prove that $\dim \widehat{HF}(Y) \geq \dim \widehat{HF}(Y_i)$ for an arbitrary splice.  Note that we do not require that $Y$ be a Seifert fibered homology sphere.  On the other hand, when $Y$ {\em is} a Seifert fibered homology sphere, we can make a stronger connection between the Floer homology and the topology.  In this case, we have that $Y = \Sigma(a_1,\ldots, a_n)$ and $Y_1 = \Sigma(a_1,\ldots,a_k, a_{k+1} \cdots a_n)$ and $Y_2 = \Sigma(a_1 \cdots a_k, a_{k+1}, \ldots a_n)$.  Note that there is a degree one map from $Y$ to $Y_1$ (respectively $Y_2$) by pinching the piece which is Seifert over $D^2(a_{k+1},\ldots,a_n)$ (respectively $D^2(a_1,\ldots,a_k)$) to a solid torus.  Conversely, every degree one map between aspherical Seifert fibered homology spheres is homotopic to a composition of such maps by \cite{Rong}.  Along these lines, the first and second authors \cite{KarLid} proved that if $f:Y \to Y_0$ is a non-zero degree map between Seifert homology spheres, then 
\begin{equation}\label{eq:degree}
\dim HF_{red}(Y) \geq |\deg(f)| \dim HF_{red}(Y_0).  
\end{equation}
Equation~\eqref{eq:splice-red} yields a substantial improvement to this inequality in the case that $f$ is degree one.  This should be reminiscent of hyperbolic volume: if $f:Y \to Y_0$ is a non-zero degree map between hyperbolic $3$-manifolds, then $\vol(Y) \geq |\deg(f)| \vol(Y_0)$.  Further, in the case of hyperbolic volume, $f$ is homotopic to a covering if and only if equality holds.  We can prove an analogue of this for Seifert homology spheres.  (For an analogue of volume for Seifert manifolds see \cite{Rong2}.) 

\begin{theorem}\label{thm:nonzerodeg}
Let $f: Y \to Y_0$ be a non-zero degree map between aspherical Seifert fibered homology spheres.  If $\dim HF_{red}(Y) = |\deg(f)| \dim HF_{red}(Y_0)$, then $f$ is homotopic to a fiber-preserving branched covering.  In particular, if $\dim HF_{red}(Y) = \dim HF_{red}(Y_0)$, then $f$ is homotopic to a homeomorphism.        
\end{theorem}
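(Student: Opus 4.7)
The approach is to factor $f$ via Rong's theorem into elementary maps and then telescope the two main inequalities of the paper. By \cite{Rong}, $f$ is homotopic to a composition
\[ Y = Z_0 \xrightarrow{\phi_1} Z_1 \xrightarrow{\phi_2} \cdots \xrightarrow{\phi_N} Z_N = Y_0, \]
in which each $\phi_i$ is either a fiber-preserving branched covering of degree $d_i$ or a degree-one pinch exhibiting $Z_{i-1}$ as the splice of $Z_i$ with a positively oriented Seifert homology sphere $W_i$ along singular fibers (as in Corollary~\ref{thm:splice}). A pinch with $W_i=S^3$ satisfies $Z_i\cong Z_{i-1}$ and, being a degree-one self-map of an aspherical Seifert manifold, is homotopic to a homeomorphism; after absorbing such pinches we may assume that every remaining pinch has $W_i\ne S^3$. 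Moreover each $Z_i$ is itself aspherical: since $\pi_1(Y_0)$ is torsion-free, any map into $Y_0$ from $S^3$ or $\Sigma(2,3,5)$ is null-homotopic, whereas $\phi_N\circ\cdots\circ\phi_{i+1}\colon Z_i\to Y_0$ has nonzero degree.

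Iterating inequality~\eqref{eq:degree} at each branched cover and Corollary~\ref{cor:splice-red} at each pinch then yields
\[ \dim HF_{red}(Y) \;\geq\; |\deg(f)|\,\dim HF_{red}(Y_0) \;+\; \sum_{\phi_i\text{ pinch}}\Bigl(\prod_{j<i}\deg(\phi_j)\Bigr)\dim HF_{red}(W_i). \]
Every coefficient $\prod_{j<i}\deg(\phi_j)$ is at least $1$, so the hypothesis $\dim HF_{red}(Y)=|\deg(f)|\dim HF_{red}(Y_0)$ forces $\dim HF_{red}(W_i)=0$ at every pinch step. Each such $W_i$ is then a positively oriented Seifert $L$-space homology sphere different from $S^3$, and hence $W_i=\Sigma(2,3,5)$.

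The crux of the proof is to rule out this remaining possibility using the asphericity of $Z_{i-1}$. Writing $Z_{i-1}=\Sigma(a_1,\dots,a_n)$ and $W_i=\Sigma(a_1\cdots a_k, a_{k+1},\dots,a_n)$, the identification $W_i=\Sigma(2,3,5)$ together with the pairwise coprimality of the Seifert data forces $a_1\cdots a_k\in\{2,3,5\}$ and $\{a_{k+1},\dots,a_n\}=\{2,3,5\}\setminus\{a_1\cdots a_k\}$; since $a_1\cdots a_k$ is prime, the multiset $\{a_1,\dots,a_k\}$ must reduce (after discarding the $1$'s) to the single entry $a_1\cdots a_k$. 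Thus the reduced Seifert data of $Z_{i-1}$ is $\{2,3,5\}$, contradicting the asphericity of $Z_{i-1}$. We conclude that the factorization contains no pinches, so $f$ is homotopic to a composition of fiber-preserving branched coverings, which is itself such a map. When $\deg(f)=1$ this branched covering has degree $1$ and is therefore a homeomorphism.
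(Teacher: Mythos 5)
Your proof is correct and follows essentially the same route as the paper's: factor $f$ via Rong's theorem, use Corollary~\ref{cor:splice-red} to force the complementary splice piece at each pinch to be an $L$-space, and then combine the classification of Seifert $L$-space homology spheres with asphericity to rule out nontrivial pinches. The only (cosmetic) difference is that you exclude $W_i = \Sigma(2,3,5)$ by analyzing the Seifert data of $Z_{i-1}$ directly, whereas the paper notes that a nontrivial pinch forces $n \geq 4$; both come down to the observation that otherwise the source of the pinch would itself be $\Sigma(2,3,5)$.
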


Note that there exists a degree one map from $\Sigma(2,3,5)$ to $S^3$, both of which have trivial $HF_{red}$.

\begin{remark}
If $f: Y \to Y_0$ is a fiber-preserving branched cover, sometimes we have the equality $\dim HF_{red}(Y) = |\deg(f)| \dim HF_{red}(Y_0)$, such as for $\Sigma(2,3,35)$, which is a 5-sheeted cover branched over the singular fiber of order 7 in $\Sigma(2,3,7)$.  (The dimensions of $HF_{red}$ are 5 and 1 respectively.)  However, this is not always the case, as for $\Sigma(3,5,28)$, which is a 4-sheeted cover branched over the singular fiber of order 7 in $\Sigma(3,5,7)$.  (The dimensions are 14 and 3 respectively.)
\end{remark}

Recall that Hendricks, Hom, and the second author \cite{HHL} make use of the constructions in \cite{HM} to define the \emph{connected Heegaard Floer homology} $HF_{conn}$, which is isomorphic to a summand of $HF_{red}$ and  is a homology cobordism invariant. 

\begin{theorem}\label{thm:hfconn}
Suppose $Y$, $Y_1$, and $Y_2$ are as described in the statement of Theorem \ref{thm:splice}.
Suppose further that $HF_{conn}(Y_1) = HF_{conn}(Y_2) = 0$ and  that $Y$ is Seifert.  Then, $HF_{conn}(Y) = 0$.    
\end{theorem}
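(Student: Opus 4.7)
The plan is to combine Corollary \ref{thm:splice} with known structural results on the involutive Heegaard Floer homology of positively oriented Seifert fibered homology spheres, particularly the computations of Dai--Manolescu. I will translate the vanishing $HF_{conn}(Y_i) = 0$ into an equality of correction terms, then use the $d$-invariant inequality together with the splicing behavior of the Neumann--Siebenmann invariant $\bar{\mu}$ to force the corresponding equality for $Y$, implying $HF_{conn}(Y) = 0$.

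First, I would record that for a positively oriented Seifert fibered homology sphere $Z$, the involutive correction term $\underline{d}(Z)$ equals $-2\bar{\mu}(Z)$, by work of Dai--Manolescu. Since $HF^+(Z)$ is concentrated in even gradings for this orientation, the involution $\iota$ acts as the identity on the tower of $HF^+(Z)$, giving $\bar{d}(Z) = d(Z)$. Moreover, from the explicit form of the $\iota$-complex in the Dai--Manolescu description, the vanishing $HF_{conn}(Z) = 0$ is equivalent to the equality of correction terms $d(Z) = -2\bar{\mu}(Z)$ for such $Z$.

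Second, the Neumann--Siebenmann invariant is additive under splicing of Seifert fibered homology spheres along fibers: $\bar{\mu}(Y) = \bar{\mu}(Y_1) + \bar{\mu}(Y_2)$, which is classical. Given these two tools, the argument is short. By the hypothesis $HF_{conn}(Y_i) = 0$, we have $d(Y_i) = -2\bar{\mu}(Y_i)$ for $i = 1, 2$. Applying Corollary \ref{thm:splice} and splice-additivity of $\bar{\mu}$,
\[
 d(Y) \leq d(Y_1) + d(Y_2) = -2\bar{\mu}(Y_1) - 2\bar{\mu}(Y_2) = -2\bar{\mu}(Y).
\]
On the other hand, the general inequality $d(Y) \geq \underline{d}(Y)$ together with $\underline{d}(Y) = -2\bar{\mu}(Y)$ gives the reverse estimate, forcing $d(Y) = -2\bar{\mu}(Y)$. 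Since $Y$ is positively oriented Seifert fibered, this implies $HF_{conn}(Y) = 0$ by the characterization from the first step.

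The main obstacle is verifying the characterization $HF_{conn}(Z) = 0 \iff d(Z) = -2\bar{\mu}(Z)$ for positively oriented Seifert fibered $Z$. One direction, that $HF_{conn}(Z) = 0$ forces $\underline{d}(Z) = d(Z)$, follows quickly from local equivalence to a standard complex; the converse requires using the specific graded-root structure of the $\iota$-complex for positive Seifert spaces to argue that equality of these numerical invariants forces the full $\iota$-complex to be locally equivalent to a standard tower. The other ingredients --- the Dai--Manolescu formula for $\underline{d}$, the splice-additivity of $\bar{\mu}$, and the evenness of $HF^+$ for positive Seifert orientation --- are well-established in the literature.
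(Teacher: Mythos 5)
Your argument is essentially identical to the paper's proof: both convert $HF_{conn}(Y_i)=0$ into $d(Y_i)=\overline{d}(Y_i)=\underline{d}(Y_i)=-2\overline{\mu}(Y_i)$ via the Dai--Manolescu identities for positively oriented Seifert spaces, sandwich $d(Y)$ between $d(Y_1)+d(Y_2)$ (Corollary~\ref{thm:splice}) and $\underline{d}(Y)=-2\overline{\mu}(Y)$ using splice-additivity of $\overline{\mu}$, and conclude $\underline{d}(Y)=\overline{d}(Y)$. The only difference is that the ``characterization'' you worry about verifying via graded roots is not needed in that strength: the paper simply invokes \cite[Proposition 4.6]{HHL}, which gives $HF_{conn}=0 \iff \underline{d}=\overline{d}$ directly, together with $d=\overline{d}$ for positive Seifert orientation.
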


By repeating the arguments below, one can also formulate a variant of Proposition~\ref{prop:knot-exteriors2}\eqref{it:item2} for the gluing of the exteriors of fibers in Seifert homology spheres, but we leave this to the reader.  

\subsection*{Outline} In Section~\ref{s:proofs} we prove Theorem~\ref{thm:knot-exteriors} by way of two lemmas, and then prove Proposition~\ref{prop:knot-exteriors2}.  In Section~\ref{s:seifert} we prove the results pertaining to Seifert homology spheres.  Finally, in Section~\ref{s:exam}, we provide some example computations of $d$-invariants of splices.

\section{$d$-invariants, splicing, and surgery}\label{s:proofs}
We begin this section with  a proof of Theorem~\ref{thm:knot-exteriors}.  However, before proving the result in full generality, we will prove it for the special case of a splice.  First we need a key fact from \cite[p.10]{Sav}, see also \cite{FM}, \cite{Gordon}.  We include a Kirby calculus proof for the benefit of the reader.  
\begin{lemma}\label{lem:desc}
The splice $Y$ of $(M,K)$ and $(Z,J)$ can be described in the following two ways: For $\epsilon \in \{-1,+1\},$ let $M_\epsilon=M(K;\epsilon)$ and $Z_\epsilon=Z(J;\epsilon)$.  Then  $Y=(M_{-1} \# Z_{-1})(\widetilde{K}\#\widetilde{J};+1)=(M_{+1} \# Z_{+1})(\widetilde{K}\#r(\widetilde{J});-1)$  where  $\widetilde{K}$ and $\widetilde{J}$ are the canonical longitudes of the two surgeries, and $r(\widetilde{J})$ denotes the reverse of the oriented knot $\widetilde{J}$. 
\end{lemma}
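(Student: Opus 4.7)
My plan is to prove the first identity $Y = (M_{-1} \# Z_{-1})(\widetilde{K} \# \widetilde{J}; +1)$ by constructing an explicit torus decomposition of the right-hand side; the second identity follows by the same argument applied to $+1$-surgery, with $r(\widetilde{J})$ compensating for the orientation reversal of the dual knot that occurs when the surgery coefficient changes sign.

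The first step is to compute the change of meridian-longitude coordinates induced by $\pm 1$-surgery. For a knot $K$ in a homology sphere $M$, the exterior $E(K)$ is homeomorphic to $M_{\epsilon} \setminus \nu(\widetilde{K})$, but the boundary torus acquires new coordinates: a direct homological computation (using $\lambda_K = 0$ in $H_1(E(K))$ together with the intersection pairing) shows that $\widetilde{\mu}_K = \epsilon \mu_K + \lambda_K$ and $\widetilde{\lambda}_K = \epsilon \lambda_K$, with analogous formulas for $J$. Next, observe that in $M_{-1} \# Z_{-1}$ the connected-sum knot $\widetilde{K} \# \widetilde{J}$ has exterior decomposing as $E(K) \cup_A E(J)$, where $A$ is a connect-sum annulus whose cores on each side are the longitudes $\widetilde{\lambda}_K, \widetilde{\lambda}_J$. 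The boundary torus of this exterior is partitioned into annuli $A_K \subset \partial E(K)$ and $A_J \subset \partial E(J)$, each sharing its boundary $2\widetilde{\mu}$ with $A$. Performing $+1$-surgery attaches a solid torus $V$ along this boundary, with meridian disk bounded by $\widetilde{\mu} + \widetilde{\lambda}$, where $\widetilde{\lambda} = \widetilde{\lambda}_K + \widetilde{\lambda}_J$. The union $T := A \cup A_K$ is then a torus in the surgered manifold, and $T$ separates it into $E(K)$ on one side and $E(J) \cup V$ on the other.

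Finally, I would verify that $E(J) \cup V$ is diffeomorphic to $E(J)$ (the solid torus $V$ attached along $A_J$ is absorbable because $\widetilde{\lambda}_J = \pm \lambda_J$ bounds a Seifert surface in $E(J)$) and that the induced identification of the tori $\partial E(K) \cong T \cong \partial E(J)$, once transported back to the original $(\mu, \lambda)$-coordinates via the formulas above, sends $\mu_K \mapsto \lambda_J$ and $\lambda_K \mapsto \mu_J$---that is, it is precisely the splice map. The main obstacle is this last bookkeeping step: carefully tracking the gluing as it passes through $V$ while keeping the intersection-pairing orientations consistent on both sides. Once orientations are fixed the identification is forced, and the second identity is proved analogously; the appearance of $r(\widetilde{J})$ there reflects the fact that switching from $-1$ to $+1$ surgery on $J$ reverses the induced orientation on the dual knot $\widetilde{J}$.
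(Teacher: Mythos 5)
Your approach (a direct cut-and-paste/torus-decomposition argument) is genuinely different from the paper's, which represents the splice by a surgery diagram and performs Kirby moves (a $\mp 1$ blow-up to unlink $K$ and $J$, followed by handle slides identifying the blow-up curve with $\widetilde{K}\#\widetilde{J}$ or $\widetilde{K}\# r(\widetilde{J})$). Your strategy can be made to work, and your surgery coordinate change $\widetilde{\mu}=\epsilon\mu+\lambda$, $\widetilde{\lambda}=\epsilon\lambda$ is correct. But as written there are two problems. First, a factual error: the connect-sum annulus $A$ in $E(\widetilde{K}\#\widetilde{J})$ has core isotopic to the \emph{meridian} $\widetilde{\mu}$, not to the longitudes $\widetilde{\lambda}_K,\widetilde{\lambda}_J$ --- indeed this is forced by your own (correct) statement that $\partial A$ consists of two copies of $\widetilde{\mu}$, since the core of an annulus is parallel to its boundary components.

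Second, and more seriously, the step you defer as ``bookkeeping'' is the entire content of the lemma, and it is not ``forced once orientations are fixed.'' By Gordon's lemma (cited in the paper), the gluings of two knot exteriors producing homology spheres form an infinite family parametrized by the integers $n_1,n_2$ in the matrix $A$ of the paper; soft considerations therefore cannot single out the splice ($n_1=n_2=0$) from, say, $Y(K_1,K_2,A)$ with other $n_i$. One must actually compute the gluing map. Concretely: writing $\partial E(\widetilde{K}\#\widetilde{J})=B_K\cup B_J$ cut along two meridians $\partial A$, the identification of $T=A\cup B_K$ with $\partial E(J)$ pushes the spanning arc of $\widetilde{\lambda}_K$ in $B_K$ across the surgery solid torus $V$; the resulting arc in $B_J$ differs from the corresponding arc of $\widetilde{\lambda}_J$ by exactly one meridian twist, precisely because the meridian disk of $V$ has boundary $\widetilde{\mu}+\widetilde{\lambda}$. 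This yields $\Phi(\widetilde{\mu}_K)=\widetilde{\mu}_J$ and $\Phi(\widetilde{\lambda}_K)=-\widetilde{\mu}_J-\widetilde{\lambda}_J$, which is \emph{not} visibly the splice map; only after substituting $\widetilde{\mu}_i=-\mu_i+\lambda_i$, $\widetilde{\lambda}_i=-\lambda_i$ does one find $\mu_K\mapsto\lambda_J$, $\lambda_K\mapsto\mu_J$ (up to the orientation reversal responsible for $r(\widetilde{J})$ in the $\epsilon=+1$ case). This interaction between the $\mp 1$ surgeries on $K,J$ and the $\pm 1$ surgery on the connected sum is exactly what makes the lemma true, so omitting it leaves a genuine gap; your heuristic that ``switching from $-1$ to $+1$ surgery reverses the orientation of the dual knot'' is also not an argument and is not literally correct. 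I would either carry out this computation in full or fall back on the Kirby-calculus proof.
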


\begin{proof}
From the definition, when we splice the homology spheres $M$ and $Z$ along oriented knots $K$ and $J$ we glue $M\setminus K$ to $Z\setminus J$ in such a way that the meridian of $K$ is identified with the longitude of $J$ and vice versa.  Note further that the exterior of $K$  can be seen by removing a meridian of $K$ after performing $0$-surgery on $K$, and the Seifert longitude of $K$ is then described by a meridian of this meridian, and the same is true for $J$.  
Hence the   splice is represented by the surgery diagram on top  of Figure~\ref{fig:splice_as_surgery}, where we additionally indicated the orientations on $K$ and $J$. Then the  two 0-framed unknots in this diagram can immediately be canceled to get the surgery picture in the second row. This agrees with the convention \cite[Figure 1.4]{Sav} 

\begin{figure}[h]
	\includegraphics[width=0.7\textwidth]{./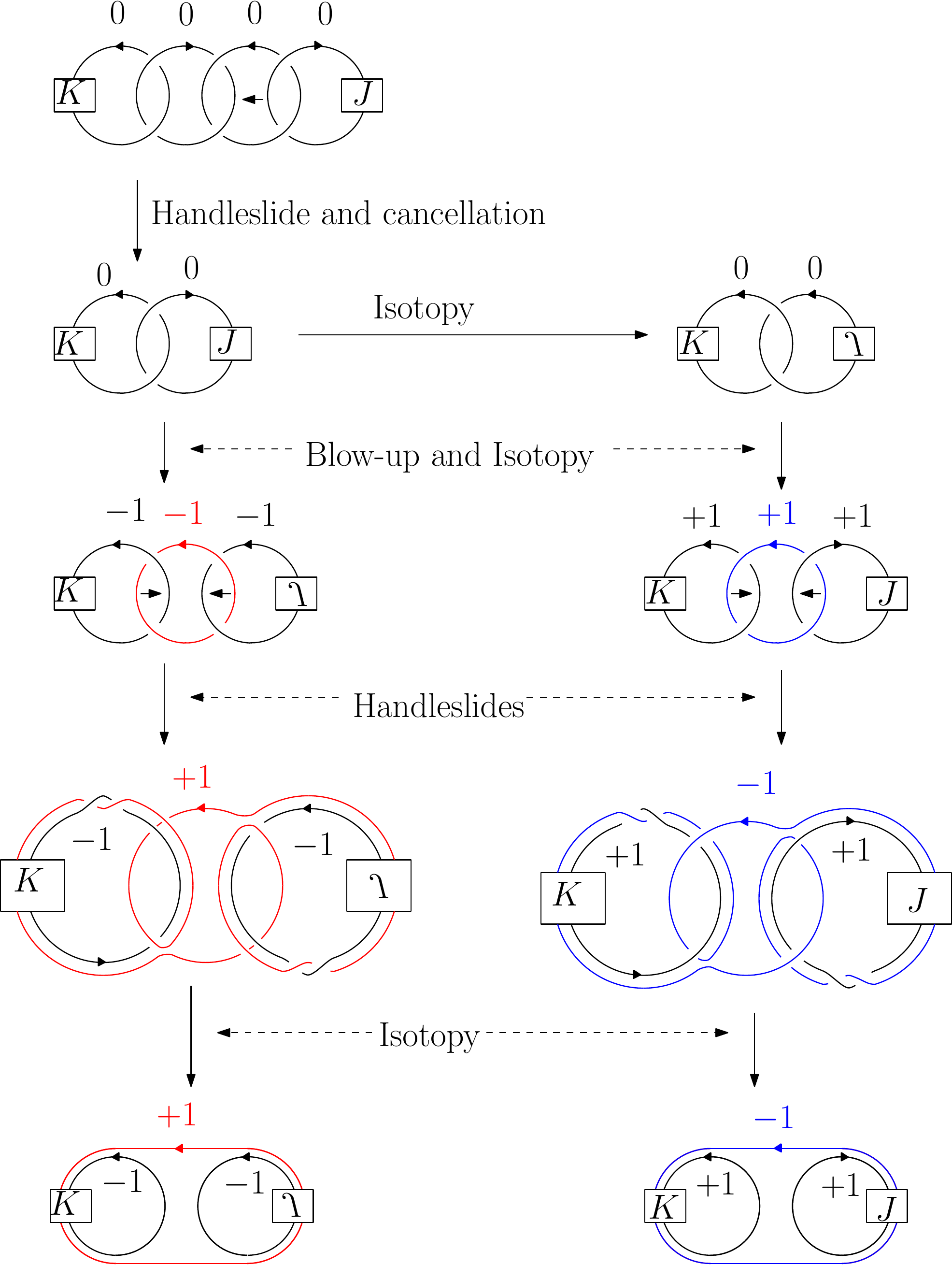}
	\caption{Surgery diagrams yielding equivalent descriptions of the splicing operation. The framings are given relative to Seifert framings.}
	\label{fig:splice_as_surgery}
\end{figure}

	To obtain the description corresponding to $\epsilon=-1$ we follow the vertical moves on the left hand side of  Figure~\ref{fig:splice_as_surgery}.  We blow-up with a $-1$-framed unknot (red curve) to unlink $K$ and $J$ then slide it over both $K$ and $J$ to get the final figure on bottom left. The red curve  now is now isotopic to $\widetilde{K}\#\widetilde{J}$  in $M_{-1}\# Z_{-1}$. 
	
The description corresponding to $\epsilon=+1$ follows from a similar argument indicated on the right hand side. Note that by an isotopy we can turn the $+1$ full twist between $K$ and $J$ into a $-1$ full twist, so we can unlink them this time by blowing-up with a $+1$-framed unknot (blue curve). We slide it over both $K$ and $J$ to  get the required description. 
\end{proof}

\begin{lemma}\label{lem:destimate}
Suppose $K_1$ and $K_2$ are knots in integral homology spheres $Y_1$ and $Y_2$ respectively. Let $Y$ be the splice of $Y_1$ and $Y_2$ along $K_1$ and $K_2$. Then
$$ \sum_{i=1}^2 d(Y_i(K_i;+1)) \leq d(Y) \leq \sum_{i=1}^2 d(Y_i(K_i;-1)). $$	 
\end{lemma}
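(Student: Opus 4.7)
The plan is to combine Lemma~\ref{lem:desc} with a standard monotonicity of the $d$-invariant under $\pm 1$-framed integral surgery, together with additivity under connected sum.

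First, I would record (or prove) the following surgery inequality: if $Z_2$ is obtained from an integer homology sphere $Z_1$ by $\epsilon$-framed surgery on a knot $L \subset Z_1$ with $\epsilon \in \{-1,+1\}$, then $\epsilon \cdot d(Z_2) \leq \epsilon \cdot d(Z_1)$. The standard argument is short: the surgery trace $W$ is a cobordism from $Z_1$ to $Z_2$ whose intersection form has sign $\epsilon$. Attach a $1$-handle to connect the two boundary components; after reorientation this produces a negative-definite $4$-manifold whose boundary is $-Z_2 \# Z_1$ in the case $\epsilon = +1$ and $Z_2 \# (-Z_1)$ in the case $\epsilon = -1$. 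Applying \cite[Corollary~9.8]{OzSz1} together with the connected sum formula $d(A \# B) = d(A) + d(B)$ yields the claimed inequality in both cases.

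Next, Lemma~\ref{lem:desc} provides two alternative surgery descriptions of the splice. Writing $M_\epsilon = Y_1(K_1;\epsilon)$ and $Z_\epsilon = Y_2(K_2;\epsilon)$, we have
\[
Y = (M_{-1} \# Z_{-1})(\widetilde{K}\#\widetilde{J};\,+1) = (M_{+1} \# Z_{+1})(\widetilde{K}\#r(\widetilde{J});\,-1).
\]
So $Y$ is obtained from the connected sum $M_{-1}\# Z_{-1}$ by $+1$-surgery, and from $M_{+1}\# Z_{+1}$ by $-1$-surgery.

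Finally, applying the surgery monotonicity from the first paragraph to each description (with $\epsilon = +1$ and $\epsilon = -1$ respectively) and using additivity of $d$ under connected sum gives
\[
d(Y) \leq d(M_{-1} \# Z_{-1}) = d(Y_1(K_1;-1)) + d(Y_2(K_2;-1))
\]
and
\[
d(Y) \geq d(M_{+1} \# Z_{+1}) = d(Y_1(K_1;+1)) + d(Y_2(K_2;+1)),
\]
which is exactly the statement of the lemma. The only potential source of trouble is bookkeeping: one must make sure the $(+1)$-surgery description is paired with the $(-1)$-surgeries on the $K_i$ to produce the upper bound, and vice versa. Once Lemma~\ref{lem:desc} is in hand this matching is automatic, so the proof reduces to assembling the two ingredients above.
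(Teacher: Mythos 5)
Your proof is correct and follows essentially the same route as the paper: combine Lemma~\ref{lem:desc} with the monotonicity of $d$ under $\pm 1$-surgery and additivity under connected sum, with the same pairing of the $(\mp 1)$-surgery descriptions to the upper and lower bounds. The only cosmetic difference is that the paper simply cites \cite[Corollary 9.14]{OzSz1} for the surgery monotonicity, whereas you sketch its standard proof via definite cobordisms and \cite[Corollary 9.8]{OzSz1}.
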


\begin{proof}
First, fix $\epsilon \in \{-1, +1\}$.  Recall from \cite[Corollary 9.14]{OzSz1} that if $Z_1$ is an integral homology sphere and if $Z_2$ is obtained from $Z_1$ by $\epsilon$-surgery on a knot $L$ in $Z_1$, then $\epsilon d(Z_2)\leq  \epsilon d(Z_1)$.  Now, suppose that $Y$ is the splice of $(Y_1,K_1)$ and $(Y_2,K_2)$.  Immediately from Lemma~\ref{lem:desc} and the additivity of the $d$-invariant under connected sum, we get that for $\epsilon \in \{-1,1\}$, 
\[\epsilon d(Y) \leq \epsilon d(Y_1(K_1,\epsilon)\# Y_2(K_2,\epsilon) ) =  \epsilon (d(Y_1(K_1,\epsilon)) + d(Y_2(K_2,\epsilon) )).\] 
\end{proof}

With these lemmas in hand, we can easily prove Theorem~\ref{thm:knot-exteriors}.
\begin{proof}[Proof of Theorem~\ref{thm:knot-exteriors}]
The gluing map with matrix $A$ identifies $\mu_1 + n_1 \lambda_1$ with $\lambda_2$ and $\mu_2 + n_2\lambda_2$ with $\lambda_1$.  Note that $\mu_i + n_i \lambda_i$ (respectively $\lambda_i$) is the meridian (respectively longitude) of the core $\widetilde{K}_i$ in the surgered manifold $Y_i(K_i, \nicefrac{1}{n_i})$.  Therefore, we see that $Y(K_1,K_2,A)$ is exactly the splice of $(Y_1(K_1, \nicefrac{1}{n_1}), \widetilde{K}_1)$ and $(Y_2(K_2, \nicefrac{1}{n_2}), \widetilde{K}_2)$.  We would like to apply Lemma~\ref{lem:destimate}.  Note that the further surgered manifold $Y_i(K_i, \nicefrac{1}{n_i})(\widetilde{K}_i,\pm 1)$ is simply $Y_i(K_i, \nicefrac{1}{(n_i\pm 1)})$.  The result now follows from Lemma~\ref{lem:destimate}.

On the other hand, the gluing map with matrix $-A$ identifies $-\mu_1-n_1\lambda_1$ with $\lambda_2$ and $\mu_2 + n_2 \lambda_2$  with $-\lambda_1$. Since $-\mu_1$ and $-\lambda_1$ are the preferred meridian and longitude of the reverse knot $r(K_1)$, we can see that in this case $Y(K_1,K_2,-A)$ is the splice of $(Y_1(r(K_1), \nicefrac{1}{n_1}), r(\widetilde{K}_1))$ and $(Y_2(K_2, \nicefrac{1}{n_2}), \widetilde{K}_2)$.  Since the $d$-invariant of a surgery does not depend on string orientation of the knot, the rest of the argument is identical to that in the first case
\end{proof}

\begin{proof}[Proof of Proposition~\ref{prop:knot-exteriors2}]
\eqref{it:item1} By work of Ni and Wu \cite[Proposition 1.6]{niwu}, for any knot $K$ in $S^3$ and integer $n$ it is the case that $d(S^3(K, \nicefrac{1}{n})) = -2\epsilon V_0(\epsilon K)$ where $\epsilon$ denotes the sign of $n$.
Inequality \eqref{eq:item1} then follows from Theorem~\ref{thm:knot-exteriors} by taking the special case $Y_i=S^3$ for $i=1,2$.

\eqref{it:item2} If additionally $|n_i| \geq 2$, then notice that $n_i \pm 1$ both have the same sign as $n_i$.  Therefore in this case, the upper and lower bounds for $d(Y(K_1,K_2,\pm A))$ provided by \eqref{eq:item1} are both equal to  $-\epsilon_1 2V_0(\epsilon_1 K_1) - \epsilon_2 2 V_0(\epsilon_2 K_2)$ and \eqref{eq:item2} follows.

\eqref{it:item3} A consequence of \cite[Theorem 2.5]{niwu} is that if $V_0(K_i) = V_0(-K_i) = 0$,  then $d(S^3(K_i,+1)) = d(S^3(K_i,-1)) = 0$.  The result then follows from \eqref{eq:item1}.
\end{proof}


\section{Splicing Seifert fibers}\label{s:seifert}
We now focus on the splicing of Seifert fibers in Seifert homology spheres to prove the remaining results in the introduction.  Let $Y$ be a Seifert fibered integral homology sphere and let $W$ be the negative-definite plumbed $4$-manifold bounded by $Y$.  The intersection form on $W$ provides a unimodular negative-definite integral lattice $L_W$, and in this case $d(Y)$ can be interpreted as an invariant of the lattice by work of Ozsv\'ath-Szab\'o \cite{os:plumb}.  We briefly review the formula.

Given any unimodular negative-definite integral lattice $L$, recall that the characteristric coset of the lattice is the set of vectors $\chi \in L$  such that $\langle \chi, y \rangle_{L}\equiv \langle y,y \rangle_{L }\pmod 2$ for all $y \in L$.  Then the \emph{lattice $d$-invariant} is
$$ d(L) = \text{max} \left\{ \frac{\langle \chi , \chi \rangle_L + \text{rank}(L)}{4} : \chi \in \text{Char}(L) \right\} \in 2\mathbb{Z}.$$
It is straightforward to see that $d(L)$ is additive under lattice direct sum and that any diagonalizable lattice has $d = 0$.  Ozsv\'ath and Szab\'o prove that $d(Y) = d(L_W)$.  

In light of Lemma~\ref{lem:destimate}, to analyze the $d$-invariants of a splice of Seifert fibers, we must understand surgery on these knots.  The following is contained in work of the second and third authors \cite[Proposition 4.4]{LidTwe}, but we present a streamlined proof here.  

\begin{proposition}\label{pro:LT}
Let $Y'$ be obtained by $-1$-surgery on a Seifert fiber in a positively oriented Seifert homology sphere $Y$.  Then $d(Y') = d(Y)$.  
\end{proposition}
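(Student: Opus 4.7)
The plan is to exhibit $Y'$ as the boundary of a specific negative-definite plumbed $4$-manifold $W'$ whose intersection lattice decomposes as $L_W \oplus \langle -1 \rangle$, and then to combine additivity of the lattice $d$-invariant with Ozsv\'ath--Szab\'o's plumbing formula to conclude $d(Y') = d(Y)$. Let $W$ denote the standard negative-definite star-shaped Seifert plumbing of $Y$. A Seifert fiber $K \subset Y$ is represented in the Kirby diagram of $W$ as a meridian of a specific plumbing vertex $v_*$: the central vertex if $K$ is a regular fiber, or the outermost vertex of the arm encoding $a_j$ if $K$ is the singular fiber of multiplicity $a_j$. Attaching a $2$-handle $h$ along $K$ with framing $-1$ relative to the canonical Seifert longitude of $K$ in $Y$ (well-defined since $K$ is nullhomologous) gives $W' = W \cup h$ with $\partial W' = Y'$.

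The first step is to identify $W'$ as a standard Seifert plumbing of $Y'$. A short Kirby-calculus calculation, using the formula $\mathrm{lk}_Y(K, K^+) = -(L_W^{-1})_{v_* v_*}$ for the linking of meridians in a surgered homology sphere, shows that the new $2$-handle attaches a new vertex $w$ to $v_*$ with self-intersection $-1 + (L_W^{-1})_{v_* v_*}$. Since $(L_W^{-1})_{v_* v_*}$ is a negative integer, this new framing is at most $-2$, and the resulting plumbing graph remains star-shaped: either a new length-one arm is appended to the central vertex (regular fiber case) or arm $j$ is prolonged by one vertex (singular fiber case). Thus $W'$ is the standard negative-definite Seifert plumbing of a Seifert fibered homology sphere $Y'$, and by the Ozsv\'ath--Szab\'o plumbing formula, $d(Y') = d(L_{W'})$.

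The second step is a lattice computation. Represent the generator of $H_2(W')$ arising from $h$ by the cycle $\hat w$ obtained by gluing the core disk of $h$ to a Seifert surface for $K$ in $Y$; such a Seifert surface exists since $Y$ is a homology sphere, and it can be chosen (by pushing into a collar of $\partial W$) to be disjoint from the cores of the plumbing $2$-handles. Consequently $\hat w \cdot v_i = 0$ for every plumbing vertex $v_i$, while $\hat w \cdot \hat w = -1$ equals the framing of $h$. In the basis $\{v_i\} \cup \{\hat w\}$, the intersection form decomposes as $L_{W'} \cong L_W \oplus \langle -1 \rangle$. Because $d(\langle -1 \rangle) = 0$ and the lattice $d$-invariant is additive under orthogonal direct sum, $d(L_{W'}) = d(L_W)$. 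Combined with $d(Y) = d(L_W)$ and $d(Y') = d(L_{W'})$, this gives $d(Y') = d(Y)$.

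The step that I expect to require the most care is verifying that $W'$ is the standard Seifert plumbing of $Y'$. This rests on the Kirby-calculus identification of Seifert fibers as meridians of particular plumbing vertices, together with the conversion between the $Y$-intrinsic Seifert framing of $K$ and the ambient $S^3$ framing in the Kirby diagram. Both ingredients are well known, but require careful bookkeeping of signs and orientations; once they are in place, the subsequent lattice manipulation is routine.
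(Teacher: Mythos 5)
Your proposal is correct and follows essentially the same route as the paper's proof: attach a $-1$-framed $2$-handle to the negative-definite star-shaped plumbing $W$ along the Seifert fiber, observe that the result is again a negative-definite plumbing of $Y'$ and that the lattice splits as $L_W \oplus (-\Z)$ because the attachment is along a homology sphere, then apply the Ozsv\'ath--Szab\'o plumbing formula together with additivity of the lattice $d$-invariant. The paper's version is simply terser, leaving implicit the Kirby-calculus bookkeeping you spell out for identifying the new plumbing and the orthogonality of the new generator.
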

\begin{proof}
Consider the negative-definite star-shaped plumbed $4$-manifold $W$ bounded by $Y$.  Attach a $-1$-framed 2-handle to $W$ along the Seifert fiber.  The resulting $4$-manifold $W'$ is a negative-definite plumbing bounded by $Y'$.  Since the two-handle attachment is along the homology sphere $Y$, there is a splitting of lattices $L_{W'} \cong L_{W} \oplus (-\Z)$.  It follows that 
$$ d(Y') = d(L_{W'}) = d(L_W) + d(-\Z) =  d(Y).$$
\end{proof}


\begin{proof}[Proof of Theorem~\ref{thm:nonzerodeg}]
By work of Rong \cite{Rong}, a non-zero degree map between aspherical Seifert homology spheres is homotopic to a composition of vertical pinches, denoted $g$, followed by a fiber-preserving branched covering.  By \cite{Rong2} and \cite[Proposition 8.3]{KarLid}, $g$ is obtained by a sequence of vertical pinches of the form $\Sigma(a_1,\ldots,a_n)$ to $\Sigma(a_1 \cdots a_k,a_{k+1},\ldots,a_n)$, as described in the introduction.  We would like to show that equality in \eqref{eq:degree} implies that these vertical pinch maps are trivial (i.e. $k = 1$), which implies that $g$ is a fiber-preserving homeomorphism, completing the proof.  

If the total rank inequality in \eqref{eq:degree} is an equality, that means that it is also equality for each of the vertical pinches.   Recall that $Z = \Sigma(a_1,\ldots,a_n)$ is a splice of $Z_1 = \Sigma(a_1 \cdots a_k,a_{k+1},\ldots,a_n)$ and $Z_2 = \Sigma(a_1,\ldots, a_k, a_{k+1} \cdots a_n)$ with the same orientation, spliced along the singular fibers of order $a_1 \cdots a_k$ and $a_{k+1} \cdots a_n$ respectively.  If $\dim HF_{red}(Z) = \dim HF_{red}(Z_1)$, then $Z_2$ must be an $L$-space by \eqref{eq:splice-red}.  Note that in order to have a non-trivial pinch map, we must have $n \geq 4$ and thus $\Sigma(a_1,\ldots,a_k, a_{k+1} \cdots a_n)$ is not $S^3$ or $\Sigma(2,3,5)$, which are the only Seifert $L$-space homology spheres \cite{Eftekhary}.  Consequently, we see that the pinch maps are trivial, so $g$ is homotopic to a homeomorphism.  
\end{proof}

\begin{proof}[Proof of Theorem~\ref{thm:hfconn}]
First, we recall from \cite{DM} that a positively oriented Seifert manifold has $\underline{d} = -2\overline{\mu}$ and $d = \overline{d}$, where $\overline{\mu}$ denotes the Neumann-Siebenmann invariant \cite{Neu2},\cite{Sieb} and $\overline{d}, \underline{d}$ are the Hendricks-Manolescu involutive correction terms.  Recall that $\underline{d} \leq d \leq \overline{d}$ \cite{HM} and that $HF_{conn} = 0$ if and only if $\underline{d} = \overline{d}$ by \cite[Proposition 4.6]{HHL}.  If $Y$ is a Seifert homology sphere obtained by splicing Seifert fibers in positively oriented Seifert manifolds $Y_1$ and $Y_2$, then by Corollary~\ref{thm:splice} and the splice-additivity of $\overline{\mu}$ \cite{Sav2}, we see 
\[
\overline{d}(Y_1) + \overline{d}(Y_2) = d(Y_1) + d(Y_2) \geq d(Y) \geq -2\overline{\mu}(Y) = - 2\overline{\mu}(Y_1) -2\overline{\mu}(Y_2)  = \underline{d}(Y_1) + \underline{d}(Y_2).  
\]  
Since $HF_{conn}(Y_i) = 0$, we have $\overline{d}(Y_i) = \underline{d}(Y_i)$, and thus $d(Y) = -2\overline{\mu}(Y)$, i.e. $\underline{d}(Y) = \overline{d}(Y)$.  This implies that $HF_{conn}(Y) = 0$.   
\end{proof}

\section{Examples}\label{s:exam}
We illustrate Lemma~\ref{lem:destimate} with a basic family of graph manifold examples.  For each  $i \in \{ 1,2 \}$, pick a pair $p_i,q_i$ of relatively prime positive integers.  Additionally, let $m_1,m_2$ be any two positive integers.  For each $i$ let $Y_i$ be the positively oriented Seifert fibered integer homology sphere $\Sigma(p_i,q_i,p_iq_im_i+1)$ and let $K_i \subset Y_i$ be the singular fiber of order $p_iq_im_i +1$.

It is well-known that $Y_i(K_i;+1) \cong \Sigma(p_i,q_i,p_iq_i(m_i-1)+1)$ and that performing $-1$-surgery on the singular fiber of order $p_iq_i(m_i-1)+1$ in this latter manifold results in $\Sigma(p_i,q_i,p_iq_im_i+1) = Y_i$.   Now we can see that Proposition~\ref{pro:LT} above implies that $d(Y_i(K_i;\epsilon)) = d(Y_i)$ for each $i \in \{1,2\}$ and for each $\epsilon \in \{ -1, +1\}$.  Letting $Y$ be the splice of $Y_1$ and $Y_2$ along $K_1$ and $K_2$, we see that the upper and lower bounds in Lemma~\ref{lem:destimate} are both equal to $d(Y_1)+d(Y_2)$ and so $d(Y) = d(Y_1) + d(Y_2)$.  (This can also be deduced from Proposition~\ref{prop:knot-exteriors2} applied to the torus knots $T_{p_i,q_i}$ in $S^3$.)  

To extend the above example, consider a Seifert fibered homology sphere $\Sigma(a_1,a_2,\dots,a_n)$. Call the operation $\Sigma(a_1,a_2,\dots,a_n)\to \Sigma(a_1,a_2,\dots,a_n+a_1\dots a_{n-1})$ stabilization and its inverse destabilization of the singular fiber of order $r$.  We say that the singular fiber of order $a_n$ is \emph{ stabilized} if $a_n >a_1\dots a_{n-1}$.  In this case $- 1$- and $+1$-surgery on  the singular fiber of order $a_n$ correspond to stabilization and destabilization of the same singular fiber, so by Proposition~\ref{pro:LT} they do not change the $d$-invariant. Hence we can conclude that the $d$-invariant is splice additive if we splice two Seifert fibered homology spheres along stabilized singular fibers. Note that the same argument also works even if we reverse orientations on either (or both) of the Seifert homology spheres. As a result we can compute the $d$-invariant of those graph homology spheres whose splice diagram contains two nodes and spliced singular fibers are stabilized.
\begin{figure}[h]
	\includegraphics[width=0.30\textwidth]{./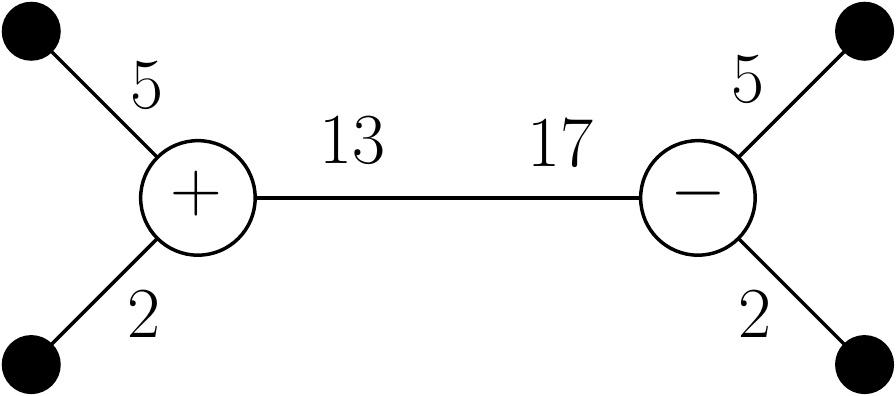}
	\caption{Splice diagram of $Y$}
	\label{fig:splice_diag}
\end{figure}

  For example, let $Y_1=\Sigma(2,5,13)$, with $K_1$ the singular fiber of order $13$, and let $Y_2=-\Sigma(2,5,17)$, with $K_2$ the singular fiber of order $17$.  Here, $\Sigma(2,5,13)$ has positive Seifert orientation (bounding a negative-definite plumbing), while $-\Sigma(2,5,17)$ is negatively oriented (bounding a positive-definite plumbing).  Let $Y$ be the splice of $Y_1$ and $Y_2$ along $K_1$ and $K_2$.  (See  Figure~\ref{fig:splice_diag} for the splice diagram of $Y$.) Note that the associated plumbing is neither positive-definite nor negative-definite.  Nonetheless, we compute 
\begin{align*}
d(Y)&=d(Y_1)+d(Y_2)\\
&=d(\Sigma(2,5,13))- d(\Sigma(2,5,17))\\
&=d(\Sigma(2,3,5))- d(\Sigma(2,5,7))\\
&=2-0\\
&=2.  
\end{align*}

Note that stabilization is essential for the splice additivity. For example $\Sigma(3,11,13,20)$ is the splice of $\Sigma(33,13,20)$ and $\Sigma(3,11,260)$ along singular fibers of orders $33$ and $260$ respectively. We have $d(\Sigma(3,11,13,20))=d(\Sigma(33,13,20))=d(\Sigma(3,11,260))=2$, so 
$$d(\Sigma(3,11,13,20)) \not = d(\Sigma(33,13,20))+d(\Sigma(3,11,260)).$$

\bibliographystyle{plain}
\bibliography{references}

\end{document}